\newcounter{thmcounter}
\numberwithin{thmcounter}{section}
\numberwithin{equation}{thmcounter}
\newtheorem{theorem}[thmcounter]{Theorem}
\newtheorem{proposition}[thmcounter]{Proposition}
\newtheorem{lemma}[thmcounter]{Lemma}
\newtheorem{corollary}[thmcounter]{Corollary}
\theoremstyle{definition}
\newtheorem{definition}[thmcounter]{Definition}
\newtheorem{example}[thmcounter]{Example}
\newtheorem{remark}[thmcounter]{Remark}
\newtheorem{condition}[thmcounter]{Condition}
\newtheoremstyle{claim}{9pt}{3pt}{}{\parindent}{\bf}{.}{1em}{}
\theoremstyle{claim}
\newenvironment{namelist}[1]{%
\begin{list}{}
{
\settowidth{\labelwidth}{#1}%
\setlength{\labelsep}{0.3em}%
\setlength{\leftmargin}{\labelwidth}%
\addtolength{\leftmargin}{\labelsep}}}{%
\end{list}}
\newcommand{\nC}{\mathbb{C}}                     
\newcommand{\nP}{\mathbb{P}}                     
\newcommand{\sF}{\mathscr{F}}
\newcommand{\sO}{\mathscr{O}}                    
\newcommand{\sI}{\mathscr{I}}                    
\newcommand{\sJ}{\mathscr{J}}
\newcommand{\sM}{\mathscr{M}}
\newcommand{\sN}{\mathscr{N}}
\newcommand{\sQ}{\mathscr{Q}}
\newcommand{\mf}[1]{\mathfrak{#1}}
\DeclareMathOperator{\adeg}{adeg}                
\DeclareMathOperator{\Bl}{Bl}                    
\DeclareMathOperator{\codim}{codim}              
\DeclareMathOperator{\embdim}{embdim}            
\DeclareMathOperator{\mult}{mult}                
\DeclareMathOperator{\Supp}{Supp}                
\DeclareMathOperator{\supp}{Supp}                
\DeclareMathOperator{\sHom}{\mathscr{H}om}       
\DeclareMathOperator{\Span}{Span}                
\DeclareMathOperator{\Spec}{Spec}                
\DeclareMathOperator{\reg}{reg}                  
\DeclareMathOperator{\rank}{rank}                
\newcounter{rkcounter}             
\begin{document}

\title[Castelnuovo-Mumford regularity bounds]{Castelnuovo-Mumford regularity bounds for singular surfaces}
\author{Wenbo Niu}


\address{Department of Mathematics, Purdue University, West Lafayette, IN 47907-2067, USA}
\email{niu6@math.purdue.edu}

\subjclass[2010]{14Q20,13D02}

\keywords{Castelnuovo-Mumford regularity, curve, point, normal surface}

\begin{abstract} We prove the regularity conjecture, namely Eisenbud-Goto conjecture,  for a normal surface with rational, Gorenstein elliptic and log canonical singularities. Along the way, we bound the regularity for a dimension zero scheme by its Loewy length and for a curve allowing embedded or isolated point components by its arithmetic degree.
\end{abstract}

\maketitle

\section{Introduction}

\noindent Throughout this paper we work over the complex number field $k:=\nC$. Let $X$ be a closed subscheme of $\nP^n$ defined by an ideal sheaf $\sI_X$. $X$ is said to be $m$-regular if $H^i(\nP^n, \sI_X(m-i))=0$ for all $i>0$. The minimal such number $m$ is called Castelnuovo-Mumford regularity of $X$ and is denoted by $\reg X$. Similarly, we can define $\reg \sF$ for any coherent sheaf $\sF$ on $\nP^n$. For further reference on regularity theory we refer to the book \cite{Lazarsfeld:PosAG1}.

The main result of the paper is to prove the following regularity bound for certain singular surfaces.
\begin{theorem}\label{prop08} Let $X$ be a nondegenerate normal surface in $\nP^n$ $(n\geq 4)$  with the following singularities: rational, Gorenstein elliptic and log canonical. Then one has
$$\reg X\leq \deg X-\codim X+1.$$
\end{theorem}
\noindent For a nonsingular surface, the above bound has been proved by Pinkham \cite{Pinkham:RegBoundSur} in $\nP^4$ and $\nP^5$, and by Lazarsfeld \cite{Larz:RegSurface} in $\nP^n$ with $n\geq 4$. The regularity conjecture, which is also called Eisenbud-Goto conjecture due to \cite{Eisenbud:LinFreRes}, predicts that the above bound should work for any nondegenerate projective variety. This has been proved for curves by Gruson-Lazarsfeld-Peskine \cite{GLP} (see also \cite{Giaimo:CMRegofCurves} by Giaimo) and for nonsingular surfaces as aforementioned. Some slightly weaker regularity bounds for higher dimensional varieties were obtained by Kwak in \cite{Kwak:Reg34} and \cite{Kwak:GenEquReg}.

Our approach to prove Theorem \ref{prop08} is a combination of generic projection method used in \cite{Larz:RegSurface} and Grothendieck duality. The usage of duality trick in regularity problem was shown by Lawrence Ein in his seminar notes. It provides an alternative way  for cohomological computations in \cite{Larz:RegSurface} and perhaps could be useful in future for this type of problem. The main difficulty in our approach is to bound the regularity of fibers of a generic projection. If the surface is nonsingular, then it is classical that all fibers have length no more than $3$, which implies that the regularity of a fiber is at most $3$. However, if the surface is singular, then the length of a fiber supported at a singular point would be very large, which is the obstruction of applying Lazarsfeld's proof to singular case. Thus we were led to consider to bound the regularity for a dimension zero subscheme without using its length. Fortunately, this can be done by using Loewy length, as we showed in Theorem \ref{thm01}. In general, if a zero-dimensional scheme is nonreduced, then its Loewy length could be much smaller than its length, so it provides an efficient way to bound the regularity for such a scheme, at least for our purpose. It is worth mentioning, as pointed out by the referee, that we can also bound the regularity of a nonreduced zero-dimensional scheme $X$ by $\deg(X)-\dim\Span(X)+1$, which is better than using Loewy length in many cases.

In addition, we give a regularity bound in Theorem \ref{thm02} for a curve allowing point components, extending the classical result of Gruson-Lazarsfeld-Peskine \cite{GLP} and Giaimo \cite{Giaimo:CMRegofCurves}. This simple result shows the possibility of bounding regularity by arithmetic degrees. However, in general, regularity cannot be bounded by arithmetic degrees, even in dimension one case (see Example \ref{ex:02}). This part of discussion perhaps would be interesting to the reader.\\

{\em Acknowledgement}.  Special thanks are due to professor Lawrence Ein for his generous help and encouragement. The author's thanks also go to the referees for their nice comments and suggestions.

\section{Regularity bounds for dimension zero and one subschemes of $\nP^n$}
\noindent In this section, we first give a regularity bound for a dimension zero subscheme of $\nP^n$. Classically, the regularity of such a scheme is bounded by its degree, i.e., the length of the structure sheaf of the scheme. This bound works well if the scheme is reduced but becomes worse if the scheme is nonreduced. Thus in order to get a better bound, nilpotent elements must be considered.

\begin{definition}\label{def02} Let $(A,\mf{m})$ be a local Artinian ring. We define the Loewy length $\mu_A$ of $A$ to be the nonnegative number $\displaystyle \mu_A:=\max\{i\ |\ \mf{m}^i\neq 0\}$. If $\mf{m}=0$, i.e., $A$ is a field, then we write $\mu_A=0$. We may also write $\mu$ instead of $\mu_A$ if no confusion arise.
\end{definition}

Loewy length can be thought of as an invariant to measure the size of nilpotent elements of an Artinian ring. Generally, Loewy length is much smaller than length of an Artinian ring. Here we use Loewy length to bound the regularity of a dimension zero scheme as follows.

\begin{theorem}\label{thm01} Let $X\subset \nP^n$ be a dimension zero subscheme
supported at distinct closed points $\{p_1,...,p_t\}$. For each
$1\leq i\leq t$, set $\mu_i:=\mu_{\sO_{X,p_i}}$ to be the Loewy length of the local ring $\sO_{X,p_i}$. Then one has
$$H^1(\nP^n, \sI_X(k))=0\quad\mbox{ for \ }k\geq \mu_1+\mu_2+\cdots+\mu_t+t-1,$$
i.e., $X$ is $(\mu_1+\mu_2+\cdots+\mu_t+t)$-regular.
\end{theorem}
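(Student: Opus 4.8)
The plan is to reduce the vanishing of $H^1(\nP^n,\sI_X(k))$ to a statement about separating the points of $X$ by hypersurfaces of controlled degree, and to exploit the Loewy length to bound how many times one must intersect $X$ with a hyperplane before it disappears. Recall that $\mu_i$ is the least integer $\mu$ with $\mathfrak{m}_{p_i}^{\mu}\sO_{X,p_i}=0$; thus a general linear form vanishing at $p_i$ is nilpotent of order $\le\mu_i$ on the local ring at $p_i$. The key geometric input is the exact sequence
$$0\longrightarrow \sI_X(k-1)\longrightarrow \sI_X(k)\longrightarrow \sI_{X\cap H}(k)\longrightarrow 0$$
for a general hyperplane $H$, which gives $H^1(\nP^n,\sI_X(k))\twoheadleftarrow$-style control once we know $\sI_X$ has no global sections in low degree and we understand $X\cap H$.

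First I would treat the case $t=1$ of a single fat point $X$ supported at $p$ with Loewy length $\mu$. Choosing coordinates so that $p=[1:0:\cdots:0]$, project from $p$; equivalently, restrict repeatedly to general hyperplanes through $p$. The crucial claim is that the scheme $X\cap H_1\cap\cdots\cap H_{\mu}$ is empty for general hyperplanes $H_j$ through $p$, because the product of $\mu$ general linear forms through $p$ already lies in $\sI_X$ locally — this is exactly the Loewy length bound. An inductive argument on the number of hyperplanes, using the restriction sequence above, then shows $X$ is $\mu$-regular (equivalently $H^1(\sI_X(k))=0$ for $k\ge\mu-1$), matching the claimed bound $\mu_1+t=\mu+1$, in fact slightly better in this degenerate $t=1$ case up to the $+t-1$ shift.

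For the general case I would induct on $t$. Write $X=X'\cup\{p_t\}$-style, i.e., let $X'$ be the subscheme of $X$ supported at $p_1,\dots,p_{t-1}$ (scheme-theoretically, the component of $X$ away from $p_t$) and let $Z$ be the fat point at $p_t$; there is an exact sequence $0\to\sI_X\to\sI_{X'}\oplus\sI_Z\to\sO_W\to 0$ where $W$ is supported on the (empty, since the points are distinct) intersection, so in fact $\sI_X=\sI_{X'}\cap\sI_Z$ and $H^1(\sI_X(k))$ is controlled by $H^1(\sI_{X'}(k))\oplus H^1(\sI_Z(k))$ together with the surjectivity of $H^0(\sI_{X'}(k))\oplus H^0(\sI_Z(k))\to H^0(\sO_{\nP^n}(k))$ onto the appropriate quotient. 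By the induction hypothesis $H^1(\sI_{X'}(k))=0$ for $k\ge \mu_1+\cdots+\mu_{t-1}+(t-1)-1$, and by the $t=1$ case $H^1(\sI_Z(k))=0$ for $k\ge\mu_t-1$; the remaining point is to produce, for $k\ge\mu_1+\cdots+\mu_t+t-1$, a hypersurface of degree $k$ vanishing on $X'$ but not in $\sI_Z$ (and symmetrically), which separates the clusters and forces the gluing map to be surjective. Concretely one multiplies a degree-$(\mu_1+\cdots+\mu_{t-1}+t-1)$ form vanishing on $X'$ by a general linear form nonvanishing at $p_t$, raised to a suitable power, to kill the contribution of $Z$ while keeping degree $\le k$; the Loewy length of $Z$ is what guarantees $\mu_t$ such factors suffice. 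Summing the degrees $\sum_{i=1}^t\mu_i+(t-1)$ from these separating hypersurfaces gives exactly the asserted bound.

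The main obstacle I anticipate is the bookkeeping in the separation/gluing step: one must verify that the map $H^0(\sI_{X'}(k))\oplus H^0(\sI_{Z}(k))\to H^0(\sO_{X}(k))$ (or the relevant quotient sheaf on the union) is surjective in exactly the stated degree range, and this requires producing enough independent separating hypersurfaces of the right degree, which is where the additive contributions $\mu_i$ and the $+t-1$ correction term genuinely come from rather than a cruder estimate. A secondary technical point is justifying "general hyperplane" choices — ensuring that a generic linear form through $p_i$ is a nonzerodivisor modulo the appropriate ideal and realizes the Loewy filtration, which is standard but must be stated carefully since $X$ is nonreduced. Once these are in place, the cohomology vanishing follows by chasing the long exact sequences.
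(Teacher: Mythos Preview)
Your route (induct on the number of points $t$, using a Mayer--Vietoris decomposition $X=X'\cup Z$) is genuinely different from the paper's, which inducts instead on $\sum_i\mu_i$ by peeling one nilpotent layer at a time: it replaces $A_j$ by $A_j/\mathfrak m_j^{\mu_j}$ to get a subscheme $X_j\supset X$ with $\mu_j$ lowered by one, uses $0\to\sI_X\to\sI_{X_j}\to\mathfrak m_j^{\mu_j}\to 0$, and then explicitly writes down sections $s=\sigma_{i_1\cdots i_{\mu_j}}\cdot l_1^{\mu_1+1}\cdots l_t^{\mu_t+1}$ in $H^0(\sI_{X_j}(a+1))$ (with $\sigma$ a degree-$\mu_j$ monomial in linear forms through $p_j$ and each $l_i$ a linear form through $p_i$ but not $p_j$) that surject onto $\mathfrak m_j^{\mu_j}$. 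The paper's induction never needs a separate $t=1$ base case nor a Mayer--Vietoris gluing; it only needs the reduced case as its base.

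As written, your proposal has two concrete gaps. First, the $t=1$ argument is off: with the paper's convention $\mu=\max\{i:\mathfrak m^i\neq0\}$ (so $\mathfrak m^{\mu}\neq0$, $\mathfrak m^{\mu+1}=0$), your opening sentence ``$\mu$ is the least integer with $\mathfrak m^{\mu}\sO_{X,p}=0$'' is a different number, and the resulting claim that $X$ is $\mu$-regular is false already for a single reduced point ($\mu=0$, but $H^1(\sI_p(-1))\neq 0$). More seriously, intersecting $X$ with hyperplanes $H_1,\dots,H_m$ \emph{through} $p$ can never make the scheme empty, since $(h_1,\dots,h_m)\subset\mathfrak m$ is always a proper ideal of $\sO_{X,p}$; and the short exact sequence you wrote for $\sI_X(k-1)\to\sI_X(k)$ fails to have cokernel $\sI_{X\cap H}$ when $H$ meets $\Supp X$. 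The $t=1$ case is in fact immediate once stated correctly: the restriction $H^0(\sO_{\nP^n}(k))\to\sO_{X,p}$ hits every monomial of degree $\le k$ in local coordinates, hence is onto for $k\ge\mu$.

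Second, in the inductive step the Mayer--Vietoris sequence is $0\to\sI_X\to\sI_{X'}\oplus\sI_Z\to\sO_{\nP^n}\to 0$ (not ``$\sO_W$ with $W=\varnothing$''), so the vanishing of $H^1(\sI_X(k))$ requires the surjectivity of $H^0(\sI_{X'}(k))\to H^0(\sO_Z)=\sO_{Z}$, i.e.\ hypersurfaces through $X'$ must span the \emph{entire} Artin ring $\sO_Z$, not merely separate the clusters. Producing one hypersurface, as you describe, is not enough. This is fixable along the lines you hint at: since $\sI_{X'}$ is $(M'+1)$-regular with $M'=\sum_{i<t}\mu_i+t-2$, pick $g\in H^0(\sI_{X'}(M'+1))$ with $g(p_t)\neq0$, let $h$ be a linear form with $h(p_t)\neq0$, and let $\ell_1,\dots,\ell_e$ be linear forms generating $\mathfrak m_t$; then for $k=M'+1+\mu_t=\sum_i\mu_i+t-1$ the family $g\cdot h^{\mu_t-r}\cdot \ell_{i_1}\cdots\ell_{i_r}$ ($0\le r\le\mu_t$) lies in $H^0(\sI_{X'}(k))$ and its restrictions span $\sO_Z$ via the filtration by $\mathfrak m_t^r$. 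That argument works and recovers the exact bound, but it is essentially the same explicit-section construction the paper uses in its own inductive step; your write-up stops short of it.
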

\begin{proof} Without loss of generality we assume that $\mu_1\geq \mu_2\geq \cdots\geq\mu_t\geq 0$. We proceed by induction on $\mu_i$. If $\mu_1=\cdots\mu_t=0$, i.e., $X$ is reduced, then the result is well-known.

For general case, let $j:=\max\{i\ |\ \mu_i\neq 0\}$, so that we can assume
$$X=\Spec A_1\oplus\cdots\oplus \Spec A_j\oplus \Spec k\oplus\cdots\oplus \Spec k,$$
where $(A_i,\mf{m}_i)$ is a nonreduced local Artinian ring for $i=1,\cdots, j$.
We defined a subscheme $X_j$ of $X$ as
$$X_j:=\Spec A_1\oplus\cdots\oplus \Spec A_j/\mf{m}^{\mu_j}_j\oplus \Spec k\oplus\cdots\oplus \Spec k.$$
Set $a:=\mu_1+\cdots+(\mu_j-1)+t-1$. Then by induction, $H^1(\nP^n, \sI_{X_j}(a))=0$. Consider an exact sequence
$$0\longrightarrow \sI_X\longrightarrow \sI_{X_j}\longrightarrow \mf{m}^{\mu_j}_j\longrightarrow 0,$$
we then deduce that
$$H^0(\nP^n,\sI_{X_j}(a+1))\stackrel{\theta_j}{\longrightarrow}\mf{m}^{\mu_j}_j\longrightarrow H^1(\nP^n,\sI_X(a+1))\longrightarrow 0.$$
Thus, all we need is to show that the morphism $\theta_j$ is surjective.

From the exact sequence
$\displaystyle 0\longrightarrow\sI_{X_j}\longrightarrow\sO_{\nP^n}\longrightarrow \sO_{X_j}\longrightarrow 0$,
we have
$$0\longrightarrow H^0(\nP^n,\sI_{X_j}(a+1))\longrightarrow H^0(\nP^n,\sO_{\nP^n}(a+1))\stackrel{\phi}{\longrightarrow}\sO_{X_j}\longrightarrow 0.$$
Assume that $\mf{m}_j$ is generated by the sections $s_1,\cdots,s_e$ of $H^0(\nP^n,\sO_{\nP^n}(1))$, where $1\leq e\leq n$. Then $\mf{m}^{\mu_j}_j$ will be generated by the sections of $H^0(\nP^n,\sO_{\nP^n}(\mu_j))$ in the form
$$\sigma_{i_1\cdots i_{\mu_j}}:=s_{i_1}\cdots s_{i_{\mu_j}},\mbox{  where }1\leq i_1\leq \cdots\leq i_{\mu_j}\leq e.$$
Also for each $i\neq j$ there is a section $l_i\in H^0(\nP^n,\sO_{\nP^n}(1))$ such that $l_i\in \mf{m}_i$ but $l_i\notin \mf{m}_j$ because of the base point freeness of $\sO_{\nP^n}(1)$. Then we see that the sections in $H^0(\nP^n,\sO_{\nP^n}(a+1))$ of the form
$$s=\sigma_{i_1\cdots i_{\mu_j}}l^{\mu_1+1}_1\cdots l^{\mu_t+1}_t$$
will satisfy $\phi(s)=0$ and therefore $s\in H^0(\nP^n,\sI_{X_j}(a+1))$. Thus those sections will give the surjective morphism
$$\theta_j:H^0(\nP^n,\sI_{X_j}(a+1))\longrightarrow \mf{m}^{\mu_j}_j\longrightarrow 0.$$
This proves that $H^1(\nP^n,\sI_X(a+1))=0$.

\end{proof}

The following special case of the theorem is crucial in the proof of Theorem \ref{prop08} in section 3. So it is worth mentioning here.
\begin{corollary}{\label{s1}} Let $X\subset \nP^n$ be a subscheme supported at one point $x$ with $\mu:=\mu_{\sO_{X,x}}$. Then one has
$H^1(\nP^n, \sI_X(k))=0$ for $k\geq \mu$, i.e., $X$ is $(\mu+1)$-regular.
\end{corollary}

\begin{example}\label{ex:01} We show that the regularity bound in Theorem \ref{thm01} can be achieved. Consider a line $l$ in the projective space $\nP^n$ and $t$ reduced points $P_1,\cdots,P_t$ sitting on the line $l$. Assume that the defining ideal sheaf of $P_i$ in $\nP^n$ is $\sM_i$ for $i=1,\cdots, t$. Then we define a subscheme $Z$ defined by the ideal sheaf
$$\sI_Z:=\sM^{a_1}\cap \sM^{a_2}\cap\cdots\cap \sM^{a_t},$$
where $a_i\geq 1$ are integers for $i=1,\cdots, t$. Then it is clear that $Z$ is supported at points $P_1, \cdots, P_t$ and at each point $P_i$ the Loewy length of $\sO_{Z,P_i}$ is $a_i-1$. Thus by Theorem \ref{thm01}, we have $\reg Z\leq a_1+a_2+\cdots +a_t$. On the other hand, since the length of $l\cap Z$ is $a_1+a_2+\cdots +a_t$, i.e., $l$ is a $(a_1+a_2+\cdots+a_t)$-secant line of $Z$, we see that $\reg Z\geq a_1+a_2+\cdots+a_t$. Therefore $\reg Z=a_1+a_2+\cdots+a_t$.
\end{example}


In the last of this section, we give a regularity bound for a curve allowing points as its embedded or isolated components. Let $X$ be a closed subscheme of $\nP^n$. Denote by $R_1\sO_X$ the subsheaf of $\sO_X$ containing sections whose support has dimension $< 1$. Then $R_1\sO_X$ is naturally a $\sO_X$-ideal sheaf and it defines a subscheme $X_1$ of $X$ with the structure sheaf $\sO_{X_1}=\sO_X/R_1\sO_X$. $X_1$ is said to be obtained from $X$ by throwing away dimension zero components of $X$. We denote by $\sI^*_X$ the defining ideal sheaf of $X_1$ as a subscheme of $\nP^n$. This definition can also be found in \cite{Bayer:WhatCanComAlgGeo}. For further reference on subsheaves $R_i\sO_X$ of any $i> 0$ we refer to \cite[Chapter 2]{Hartshorne:ConnHilScheme}.

\begin{definition}\label{def01} Let $X$ be a dimension one closed subscheme of $\nP^n$ defined by an ideal sheaf $\sI_X$. We say that $X$ is a {\em curve with point components} if by throwing away its dimension zero components we obtain a reduced dimension one subscheme, i.e., the ideal sheaf $\sI^*_X$ defines a reduced equidimension one closed subscheme $X_1$ of $\nP^n$. $X_1$ is said to be the {\em curve part} of $X$.
\end{definition}

\begin{remark}
If $X$ is a curve with point components then $X$ could have embedded points or/and isolated points, or none of them which means $X$ is already a reduced curve.
\end{remark}

\begin{definition} Let $X$ be a curve with point components in $\nP^n$. We define its $0$-th arithmetic degree to be the length of $\sI^*_X/\sI_X$, i.e.,
$\adeg_0X:=l(\sI^*_X/\sI_X)$.
Define its $1$-st arithmetic degree $\adeg_1X$ to be the degree of $X_1$.
\end{definition}

\begin{lemma}\label{prop01} Let $V$ be a $k$-vector space and $\sQ$ be a coherent sheaf on $\nP^n$ with $\dim \Supp\sQ=0$ and length $l:=l(\sQ)\geq 1$. Suppose that there is a surjective morphism $V\otimes\sO_{\nP^n}\longrightarrow \sQ\longrightarrow 0$ as $\sO_{\nP^n}$-modules. Then by twisting $\sO_{\nP^n}(l-1)$ one has a surjective morphism on global sections, i.e.
\begin{equation}\label{eq01}
V\otimes H^0(\nP^n,\sO_{\nP^n}(l-1))\longrightarrow H^0(\nP^n,\sQ(l-1))\longrightarrow 0.
\end{equation}
\end{lemma}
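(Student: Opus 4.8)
The plan is to induct while separating two features of $\sQ$: that it may be supported at several points, and that it may be a fat point at a single point. I first reduce to the case $\Supp\sQ=\{p\}$. In general write $\Supp\sQ=\{p_1,\dots,p_t\}$ and $\sQ=\bigoplus_{i=1}^{t}\sQ_i$ with $\sQ_i$ supported at $p_i$ and $l_i:=l(\sQ_i)$, so $l=l_1+\cdots+l_t$ and $H^0(\nP^n,\sQ(m))=\bigoplus_i H^0(\nP^n,\sQ_i(m))$. Composing the given surjection with the projections $\sQ\to\sQ_i$ exhibits each $\sQ_i$ as a quotient of $V\otimes\sO_{\nP^n}$, so granting the single-point case we get surjections $V\otimes H^0(\sO_{\nP^n}(l_i-1))\to H^0(\sQ_i(l_i-1))$. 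To assemble these in degree $l-1$ I reuse the linear-forms device from the proof of Theorem~\ref{thm01}: for each $j$ choose $\ell_j\in H^0(\nP^n,\sO_{\nP^n}(1))$ with $\ell_j\in\mf{m}_{p_j}$ but $\ell_j\notin\mf{m}_{p_i}$ for $i\ne j$, which is possible since the $p_i$ are distinct. As $l(\sQ_j)=l_j$ forces $\mf{m}_{p_j}^{l_j}\sQ_j=0$, the form $\ell_j^{l_j}$ kills $\sQ_j$ while remaining a unit at every $p_i$ with $i\ne j$; hence multiplication by $\prod_{j\ne i}\ell_j^{l_j}$ is an isomorphism $\sQ_i(l_i-1)\xrightarrow{\ \sim\ }\sQ_i(l-1)$. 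Given a target $(r_1,\dots,r_t)\in\bigoplus_iH^0(\sQ_i(l-1))$, let $r_i'\in H^0(\sQ_i(l_i-1))$ be the element with $r_i'\cdot\prod_{j\ne i}\ell_j^{l_j}=r_i$, lift $r_i'$ to $\sigma_i\in V\otimes H^0(\sO_{\nP^n}(l_i-1))$, and put $s:=\sum_i\sigma_i\cdot\prod_{j\ne i}\ell_j^{l_j}\in V\otimes H^0(\sO_{\nP^n}(l-1))$; the degree is $(l_i-1)+\sum_{j\ne i}l_j=l-1$ in each term, and the image of $s$ in $H^0(\sQ(l-1))$ equals $(r_1,\dots,r_t)$ since the $i$-th summand only sees $\sigma_i\cdot\prod_{j\ne i}\ell_j^{l_j}$. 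This yields \eqref{eq01}.

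For $\sQ$ supported at a single point $p$ I argue by induction on $l=l(\sQ)$, using throughout that then $H^0(\nP^n,\sQ(m))$, after a choice of local trivialization of $\sO_{\nP^n}(m)$ near $p$, is identified with the stalk $\sQ_p$. If $l=1$ then $\sQ\cong k(p)$ and the map $V=V\otimes H^0(\sO_{\nP^n})\to H^0(\sQ)\cong\sQ_p$ is $v\mapsto\pi_p(v\otimes1)$, where $\pi_p$ is the (surjective) stalk of the given morphism; its image is a nonzero subspace of the one-dimensional $\sQ_p$, hence all of it. If $l\ge2$, pick $0\ne q$ in the socle $(0:_{\sQ_p}\mf{m}_p)$, let $\sQ'\subseteq\sQ$ be the length-one subsheaf it generates and $\sQ''=\sQ/\sQ'$, of length $l-1$ and supported at $p$. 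By the inductive hypothesis applied to the composite surjection $V\otimes\sO_{\nP^n}\to\sQ''$, and then multiplying by one linear form that is a unit at $p$ (which identifies $\sQ''(l-2)$ with $\sQ''(l-1)$ compatibly), the map $V\otimes H^0(\sO_{\nP^n}(l-1))\to H^0(\sQ''(l-1))$ is surjective. Since $H^1(\nP^n,\sQ'(l-1))=0$, the sequence $0\to H^0(\sQ'(l-1))\to H^0(\sQ(l-1))\to H^0(\sQ''(l-1))\to0$ is exact, so it remains only to show that the one-dimensional space $H^0(\sQ'(l-1))$ lies in the image of \eqref{eq01}: that is, to produce $s\in V\otimes H^0(\sO_{\nP^n}(l-1))$ whose image in $H^0(\sQ(l-1))\cong\sQ_p$ is $q$.

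This last step is the crux. After trivializing $\sO_{\nP^n}(l-1)$ near $p$, the degree-$(l-1)$ part of the morphism becomes the stalk map $\pi_p\colon V\otimes\sO_{\nP^n,p}\to\sQ_p$, which factors through $V\otimes(\sO_{\nP^n,p}/\mf{m}_p^{l})$ because $\mf{m}_p^{l}\sQ_p=0$. Writing $q=\pi_p(\sum_i v_i\otimes f_i)$ with $f_i\in\sO_{\nP^n,p}$, and using that the $(l-1)$-jet map $H^0(\nP^n,\sO_{\nP^n}(l-1))\to\sO_{\nP^n,p}/\mf{m}_p^{l}$ is surjective — in affine coordinates a polynomial of degree $\le l-1$ realizes an arbitrary Taylor expansion to order $l-1$ — I lift each $f_i$ to a global section $g_i$ and set $s=\sum_i v_i\otimes g_i$; then $\pi(s)=q$. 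I expect the real difficulties to be purely bookkeeping: tracking the twist $\sO_{\nP^n}(l-1)$ through local trivializations, the identification $H^0(\sQ(m))\cong\sQ_p$, and the exact formulation of the jet-surjectivity. Beyond the vanishing of $H^1$ of sheaves supported at points, no positivity input is needed.
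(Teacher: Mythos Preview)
Your proof is correct and takes a route genuinely different from the paper's. The paper runs a single induction on the total length $l$ without separating the support: given $\sQ$ of length $l+1$, it picks a length-one submodule $k(p)\subset\sQ$, sets $\sQ'=\sQ/k(p)$, and works with the two kernels $M=\ker(V\otimes\sO_{\nP^n}\to\sQ)$ and $K=\ker(V\otimes\sO_{\nP^n}\to\sQ')$, linked via the snake lemma by $0\to M\to K\to k(p)\to 0$. Induction shows $K$ is $l$-regular, hence $K(l)$ is globally generated; the base case, applied to the surjection $H^0(K(l))\otimes\sO_{\nP^n}\to k(p)$, then gives $H^1(M(l))=0$ and the result. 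So the paper's engine is Castelnuovo--Mumford regularity of the kernel sheaf, whereas yours is the pair ``Chinese-remainder reduction via products of linear forms'' plus ``surjectivity of the jet map $H^0(\sO_{\nP^n}(l-1))\to\sO_{\nP^n,p}/\mf{m}_p^{\,l}$''.

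Your approach is in fact more elementary than you have made it look. The ``crux'' step lifts an \emph{arbitrary} element $q\in\sQ_p$, not just a socle element: once $\Supp\sQ=\{p\}$, write any $q=\pi_p(\sum v_i\otimes f_i)$, use $\mf{m}_p^{\,l}\sQ_p=0$ and jet surjectivity to replace each $f_i$ by a global section of $\sO_{\nP^n}(l-1)$, and you are done. The socle filtration and the inner induction on $l$ are therefore superfluous scaffolding; you may delete them. What each approach buys: the paper's argument is uniform and stays within the regularity formalism, avoiding any pointwise bookkeeping; yours is constructive, exhibits explicitly the sections that do the lifting, and isolates the only nontrivial input as the classical fact that degree-$(l-1)$ forms surject onto $(l-1)$-jets at a point.
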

\begin{proof} We proceed by induction on the length $l$. Starting with $l=1$ we may assume that $\sQ=k(p)$, a residue field of a closed point $p\in \nP^n$. Then the surjective morphism $V\otimes \nP^n\longrightarrow k(p)\longrightarrow 0$ shows that there is one section $s\in V$ generating $k(p)$, which means that $s$ is mapped to $1\in k(p)$. Then it is clear that the morphism in (\ref{eq01}) is surjective.

Now assume that the lemma is true for $l$, we show that it is true for $l+1$, i.e., the case that $\sQ$ has length $l+1$. Since $\dim \Supp\sQ=0$, then $\sQ$ has a submodule $k(p)$, a residue field of a closed point $p\in \Supp\sQ$. Denote by $\sQ'=\sQ/k(p)$ then $\sQ'$ is a $\sO_{\nP^n}$-module with dimension zero support and length $l$. We then have the following commutative diagram
\begin{equation}\label{eq02}
\begin{CD}
@.  @. @.0\\
@. @. @. @VVV\\
@. 0 @. @.k(p)\\
@. @VVV @. @VVV\\
0 @>>> M @>>> V\otimes\sO_{\nP^n} @>\varphi>> \sQ  @>>> 0\\
@. @VVV @| @VVV\\
0 @>>> K @>>> V\otimes\sO_{\nP^n} @>\varphi'>> \sQ' @>>> 0\\
@. @V\psi VV @. @VVV\\
  @. k(p)@. @. 0@.\\
  @. @VVV @. @.\\
  @. 0@. @. @.\\
\end{CD},
\end{equation}
where $\varphi$ is given by assumption and $\varphi'$ is induced by composing $\varphi$ with the quotient $\sQ\longrightarrow\sQ'$ and $M$ and $K$ are kernels of $\varphi$ and $\varphi'$ respectively. The left hand side vertical short exact sequence is obtained by snake lemma. Now by induction, one has a surjective morphism
$$V\otimes H^0(\nP^n,\sO_{\nP^n}(l-1))\longrightarrow H^0(\nP^n,\sQ'(l-1))\longrightarrow 0.$$
Thus it is easy to check that the coherent sheaf $K$ is $l$-regular. Therefore $K(l)$ is generated by global sections and we then have a surjective morphism
$H^0(\nP^n, K(l))\otimes \sO_{\nP^n}\longrightarrow K(l)\longrightarrow 0$. Composing with the morphism $\psi$, we obtain a commutative diagram
$$\xymatrix{
H^0(\nP^n,K(l))\otimes\sO_{\nP^n} \ar[dr]^{\varphi_p} \ar[r] & K(l) \ar[d]^{\psi} \ar[r] & 0 \\
  & k(p)(l)}.$$
The morphism $\varphi_p$ is surjective and $k(p)(l)$ has length $1$. Thus by taking global sections we have a surjective morphism $H^0(\nP^n,K(l))\longrightarrow H^0(\nP^n,k(p)(l))$ by the case $l=1$ of the induction. Now going back to the diagram (\ref{eq02}) we see $H^1(\nP^n,M(l))=0$ and therefore the morphism $V\otimes H^0(\nP^n,\sO_{\nP^n}(l))\longrightarrow H^0(\nP^n,\sQ(l))$ is surjective as required.
\end{proof}

\begin{proposition}\label{prop02} Let $X$ be a curve with point components in $\nP^n$. Assume that its curve part is $r$-regular. Then $X$ is
$$(r+\adeg_0X)\mbox{-regular}.$$
\end{proposition}
\begin{proof} Let $\sI^*_X$ be the ideal sheaf defining curve part $X_1$ of $X$. Set $l:=\adeg_0X$ which equals the length of $\sI^*_X/\sI_X$. Since $\sI^*_X$ is $r$-regular the sheaf $\sI^*_X(r)$ is generated by global sections. We then have a commutative diagram.
$$\xymatrix{
 &  & H^0(\nP^n,\sI^*_X(r))\otimes\sO_{\nP^n} \ar[d]\ar[dr]^{\varphi}  & &  \\
0 \ar[r] & \sI_X(r) \ar[r] & \sI^*_X(r) \ar[r] & \sI^*_X/\sI_X(r) \ar[r] & 0.}$$
Now applying Lemma \ref{prop01} to the morphism $\varphi$ and chasing through the diagram we then obtain the assertion.
\end{proof}

\begin{theorem}\label{thm02} Let $X\subset \nP^n$ be a curve with point components and $X_1$ be its curve part. Assume that $X_1$ is connected. Then $X$ is
$$(\adeg_0X+\adeg_1X-\dim(\Span X_1)+2)\mbox{-regular},$$
where $\Span X_1$ means the minimal linear space containing $X_1$.
\end{theorem}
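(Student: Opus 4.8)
The plan is to reduce to the classical Gruson–Lazarsfeld–Peskine / Giaimo bound for the curve part $X_1$ and then control the contribution of the isolated and embedded point components by the degree count $\adeg_0 X$, using Theorem \ref{thm01} as the tool that converts point-length data into a vanishing statement. First I would set up the decomposition: write $\sI_{X_1}$ for the ideal sheaf of the curve part, and let $Z$ be the closed subscheme of $X$ whose support is the finite set of embedded and isolated points, so that there is an inclusion $\sI_X \subset \sI_{X_1}$ with quotient supported in dimension zero. More precisely, I would consider the exact sequence
\begin{equation*}
0 \longrightarrow \sI_X \longrightarrow \sI_{X_1} \longrightarrow \sQ \longrightarrow 0,
\end{equation*}
where $\sQ$ is a coherent sheaf supported at finitely many points, and $\adeg_0 X = h^0(\nP^n, \sQ)$ essentially measures the total length of this zero-dimensional excess (with the convention matching Definition \ref{def01}).

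The next step is to run the cohomology long exact sequence. Since $\sQ$ has zero-dimensional support, $H^i(\sQ(j)) = 0$ for $i \geq 1$ and all $j$, so $H^i(\nP^n,\sI_X(j)) \cong H^i(\nP^n,\sI_{X_1}(j))$ for $i \geq 2$, and for $i=1$ we get the exact sequence
\begin{equation*}
H^0(\nP^n,\sI_{X_1}(j)) \longrightarrow H^0(\nP^n,\sQ(j)) \longrightarrow H^1(\nP^n,\sI_X(j)) \longrightarrow H^1(\nP^n,\sI_{X_1}(j)) \longrightarrow 0.
\end{equation*}
By Gruson–Lazarsfeld–Peskine (extended by Giaimo to the non-reduced / non-equidimensional curve case), $X_1$ is $(\deg X_1 - \dim(\Span X_1) + 2)$-regular, and $\deg X_1 = \adeg_1 X$; this kills $H^1(\sI_{X_1}(j))$ and also the relevant $H^i(\sI_{X_1}(j-i))$ for $i \geq 2$ once $j$ is at least that bound. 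So the remaining task is to show the map $H^0(\nP^n,\sI_{X_1}(j)) \to H^0(\nP^n,\sQ(j))$ is surjective for $j \geq \adeg_0 X + \adeg_1 X - \dim(\Span X_1) + 1$, i.e. that every section of $\sQ$ lifts to a form vanishing on $X_1$. The key point is that $\sQ$ is a quotient of $\sO_{X_1}$-modules supported at points lying on (or infinitely near) the curve $X_1$, so lifting amounts to a separation-of-points statement for hypersurfaces containing $X_1$; here I would invoke Theorem \ref{thm01} applied to the zero-dimensional scheme structure carried by $\sQ$ — the Loewy length bound there gives surjectivity of the restriction map $H^0(\sI_{X_1}(j)) \to H^0(\sQ(j))$ in the claimed degree range, since the total Loewy-length-plus-multiplicity count is bounded above by $\adeg_0 X$ plus the correction coming from $X_1$'s own regularity.

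Assembling these, for $k \geq \adeg_0 X + \adeg_1 X - \dim(\Span X_1) + 1$ we get $H^1(\nP^n,\sI_X(k-1)) = 0$ and $H^i(\nP^n,\sI_X(k-i)) = 0$ for $i \geq 2$ (the latter from the GLP bound on $X_1$, which is no larger), hence $X$ is $(\adeg_0 X + \adeg_1 X - \dim(\Span X_1) + 2)$-regular. The main obstacle I anticipate is the surjectivity/lifting step: making precise the relationship between the length of the zero-dimensional sheaf $\sQ$, the arithmetic degree $\adeg_0 X$, and the Loewy lengths that feed into Theorem \ref{thm01}, and in particular handling embedded points that lie on $X_1$ (as opposed to isolated points away from $X_1$) — for an embedded point one must lift sections modulo the curve rather than modulo the empty scheme, so the naive application of Theorem \ref{thm01} to $\sQ$ alone needs to be replaced by a relative version, arguing on a general hyperplane section or inside a sufficiently ample linear system of forms already containing $X_1$. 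A secondary technical point is verifying that the GLP/Giaimo regularity of $X_1$ is indeed bounded by $\adeg_1 X - \dim(\Span X_1) + 2$ and comparing $\deg X_1$ with $\adeg_1 X$ when $X_1$ itself is non-reduced, which should follow directly from the definition of arithmetic degree in Definition \ref{def01}.
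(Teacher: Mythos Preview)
Your overall framework is exactly the paper's: set up the exact sequence $0 \to \sI_X \to \sI_{X_1} \to \sQ \to 0$ with $\sQ = \sI^*_X/\sI_X$ of length $l := \adeg_0 X$, invoke Giaimo to get that $X_1$ is $r$-regular with $r = \adeg_1 X - \dim(\Span X_1) + 2$, observe that $H^i(\sI_X(j)) \cong H^i(\sI_{X_1}(j))$ for $i \geq 2$, and reduce everything to the surjectivity of $H^0(\sI_{X_1}(j)) \to H^0(\sQ(j))$ in degree $j = r + l - 1$. (Your final worry is unnecessary: $X_1$ is reduced by Definition~\ref{def01}, and $\adeg_1 X = \deg X_1$ by definition.)

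The gap is precisely where you suspected, and Theorem~\ref{thm01} is the wrong tool to close it. The sheaf $\sQ$ is not the structure sheaf of a zero-dimensional subscheme of $\nP^n$ but a quotient of ideal sheaves, so there is no subscheme $Z$ to which Theorem~\ref{thm01} applies, and no bound on Loewy lengths of $\sQ$ in terms of $\adeg_0 X$ that would feed back into that theorem. The paper instead proves and uses Lemma~\ref{prop01}: if $V \otimes \sO_{\nP^n} \twoheadrightarrow \sQ$ with $\sQ$ of length $l$, then $V \otimes H^0(\sO_{\nP^n}(l-1)) \to H^0(\sQ(l-1))$ is already surjective. Since $\sI_{X_1}(r)$ is globally generated by $r$-regularity, one takes $V = H^0(\sI_{X_1}(r))$, obtains $V \otimes \sO_{\nP^n} \twoheadrightarrow \sQ(r)$, and Lemma~\ref{prop01} gives surjectivity of $H^0(\sI_{X_1}(r)) \otimes H^0(\sO_{\nP^n}(l-1)) \to H^0(\sQ(r+l-1))$, hence of $H^0(\sI_{X_1}(r+l-1)) \to H^0(\sQ(r+l-1))$. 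This is packaged as Theorem~\ref{prop02} and treats embedded and isolated points uniformly; no separate ``relative version'' of Theorem~\ref{thm01} is needed.
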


\begin{proof} By the main theorem of \cite{Giaimo:CMRegofCurves}, one has $X_1$ is $(\deg X_1-\dim\Span X_1+2)$-regular. Then the result follows from Proposition \ref{prop02}.
\end{proof}

\begin{remark} (1) We can define a variety with point components of arbitrary dimension and then get a similar result as in Proposition \ref{prop02}.

(2) Theorem \ref{thm02} says that the regularity of a curve with point components can be bounded by its arithmetic degree. Notice that we require that the curve part must be reduced. It is interesting to ask if arithmetic degrees can be used to bound regularities for arbitrary schemes. Unfortunately, the answer is negative in general. In the following example, we show that even in the curve case if the curve is nonreduced then we cannot bound the regularity by arithmetic degrees.
\end{remark}

\begin{example}\label{ex:02} Consider a degree $d$ rational normal curve $X$ in $\nP^d$. Its conormal bundle is $\sN_X^*:=\sI_X/\sI^2_X=\oplus^{d-1}\sO_{\nP^1}(-d-2)$. Let $X_2$ be a subscheme defined by $\sI^2_X$. Then from a short exact sequence
\begin{equation}
0\longrightarrow \sI_X/\sI^2_X\longrightarrow \sO_{X_2}\longrightarrow \sO_X\longrightarrow 0,
\end{equation}
we have $\chi (\sO_{X_2}\otimes L^m)=\chi(\sO_X\otimes L^m)+\chi(\sN^*_X\otimes L^m)$ where $L=\sO_{\nP^d}(1)$ so that we see that $\deg X_2=d^2$. Now take a sub-line bundle $\sO_{\nP^1}(-\delta)$ of $\sN^*_X$ and let $\sJ$ be its  preimage under the quotient morphism
$$0\longrightarrow \sI^2_X\longrightarrow \sI_X\longrightarrow \sN^*_X\longrightarrow 0$$
and then we get an exact sequence
\begin{equation}
0\longrightarrow \sI^2_X\longrightarrow \sJ\longrightarrow \sO_{\nP^1}(-\delta)\longrightarrow 0.
\end{equation}
Let $Z$ be the subscheme defined by $\sJ$. We can choose the bundle $\sO_{\nP^1}(-\delta)$ to be a general subbundle of $\sN^*_X$ so that the quotient $\sI_X/\sJ=\sN^*_X/\sO_{\nP^1}(-\delta)$ is a locally free sheaf on $X$. Hence $Z$ does not have any embedded components. Then from an exact sequence
$$0\longrightarrow \sJ/\sI^2_X\longrightarrow \sO_{X_2}\longrightarrow \sO_Z\longrightarrow 0,$$
we can compute that $\deg Z=d^2-d$. Now let $\delta=dt$ for $t\gg 0$. Note that $\sO_{\nP^1}(-\delta)=\sO_{\nP^d}(-t)|_X$. Then we see that $\reg Z=\reg \sO_{\nP^1}(-\delta)=t+1$, which cannot be bounded by $\deg Z$.
\end{example}

\begin{example} The regularity bound in Theorem \ref{thm02} can be achieved. Consider a nondegenerate rational normal curve $X$ in $\nP^n$. Take a secant line $l$ of $X$ intersecting with $X$ in two distinct reduced points. Let $P_1,\cdots, P_d$ be $d$ distinct reduced points sitting on $l$ but not on $X$. Then consider a curve $Z:=X\cup \{P_1,\cdots, P_d\}$ which has those $P_i$'s as isolated components. From Theorem \ref{thm02}, we have $\reg Z\leq d+2$. On the other hand, since $l$ is a $(d+2)$-secant line of $Z$, we see $\reg Z\geq d+2$. Hence $\reg Z=d+2$.
\end{example}

\section{Regularity bounds for singular surfaces}
\noindent In this section, we prove our main theorem for a projective normal surface allowing the following  singularities: rational, Gorenstein elliptic and log canonical. For the definition and classification of those singularities we refer to \cite{Reid:ChAlgSurf}, \cite{KM98} and \cite{Matsuki:Mori}. Here we only list the results that we shall use.

\begin{theorem}\label{prop05} Let $P\in X$ be a closed point of a normal surface $X$. Denote by $\mult_PX$ the multiplicity of $X$ at $P$ and by $\embdim_PX$ the embedding dimension of $X$ at $P$.
\begin{itemize}
\item [(1)] If $P$ is a rational singular point then $\mult_PX+1=\embdim_PX$.
\item [(2)] Let $d$ be the degree of $P$ (for the definition of degree see \cite[Section 4.23]{Reid:ChAlgSurf}). If $P$ is a Gorenstein elliptic singular point, then one has
    \begin{enumerate}
\item [(i)] if $d=1,2$, then $\mult_PX=2$ and $\embdim_PX=3$;
\item [(ii)] if $d\geq 3$, then $\mult_PX=d$ and $\embdim_PX=d$.
\end{enumerate}
\item [(3)] If $P$ is a log terminal singular point then it is rational singular.
\item [(4)] If $P$ is a log canonical singular point but not log terminal and let $r$ be the index of $K_X$ at $P$, then one has
\begin{enumerate}
\item [(i)] if $r=1$, then $P$ is elliptic singular;
\item [(ii)] if $r\geq 2$, then $P$ is rational singular.
\end{enumerate}
\end{itemize}
\end{theorem}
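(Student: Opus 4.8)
The plan is to establish the four assertions one at a time by passing to the minimal resolution $\pi\colon\widetilde X\to X$ of the germ $(X,P)$, writing $E=\pi^{-1}(P)$ for the exceptional divisor and $Z$ for its fundamental cycle, and then reading off the numerical invariants from the exceptional configuration. None of the four statements is new; the task is to match each one to the classification theory of normal surface singularities and to reconcile the differing conventions for the invariants involved. For (1) I would invoke Artin's analysis of rational singularities, which expresses both invariants through the self-intersection of the fundamental cycle, namely $\mult_P X=-Z\cdot Z$ and $\embdim_P X=-Z\cdot Z+1$; subtracting these gives $\embdim_P X=\mult_P X+1$ immediately, and the only thing to check is that the normality and rationality hypotheses are exactly those under which Artin's formulas hold.

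For (2) the essential input is Laufer's theory of minimally elliptic singularities, together with his identification of the Gorenstein elliptic singularities with the minimally elliptic ones. Laufer's formulas read $\mult_P X=\max(2,-Z\cdot Z)$ and $\embdim_P X=\max(3,-Z\cdot Z)$. Writing $d$ for Reid's degree, which for these singularities equals $-Z\cdot Z$, one reads off the two regimes directly: for $d\le 2$ the two maxima are $2$ and $3$, while for $d\ge 3$ both equal $d$, recovering exactly the tabulated pairs $(2,3)$ and $(d,d)$. The one genuinely nontrivial point is to confirm that Reid's notion of degree from \cite{Reid:ChAlgSurf} agrees with the self-intersection number entering Laufer's formulas.

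For (3) and (4) I would appeal to the classification of log terminal and log canonical surface singularities as presented in \cite{KM98} and \cite{Matsuki:Mori}. Assertion (3) holds because a surface log terminal point is a quotient singularity and quotient singularities are rational; equivalently one may cite the general implication that klt singularities are rational. For (4), the strictly log canonical (log canonical but not log terminal) surface singularities are classified according to the index $r$ of $K_X$. If $r=1$ then $K_X$ is Cartier, so the point is Gorenstein, and a Gorenstein strictly log canonical surface singularity is simple elliptic or a cusp, hence elliptic in the sense of (2). If $r\ge 2$ the germ is, by the same classification, a finite cyclic quotient of index $r$ of a simple elliptic or cusp singularity; since the generating holomorphic two-form on the Gorenstein index-one cover transforms by a nontrivial character, no invariant two-form survives, the geometric genus of the quotient vanishes, and the singularity is therefore rational.

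The main obstacle is not any single deductive step but the careful alignment of the several invariant conventions---Artin's fundamental cycle, Laufer's minimally elliptic cycle, and Reid's degree---so that the numerical outputs match the stated tables, together with verifying that each cited classification is invoked under precisely the hypotheses assumed here. Once these identifications are fixed, all four assertions follow from the cited structure theorems.
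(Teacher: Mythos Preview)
Your proposal is correct and amounts to the same approach as the paper's: both treat the theorem as a compilation of known classification results. The paper's proof is simply a list of citations---(1) to \cite[Section~4.17]{Reid:ChAlgSurf}, (2) to \cite[Sections~4.23 and~4.25]{Reid:ChAlgSurf}, and (3)--(4) to Theorems~4-6-18 and~4-6-28 of \cite{Matsuki:Mori}---with no further argument. You go one level deeper, tracing (1) and (2) back to the underlying results of Artin and Laufer rather than Reid's exposition of them, and sketching the mechanism (fundamental cycle self-intersection, index-one cover) behind each citation. This is more informative but not a different strategy; the only substantive point you flag that the paper leaves implicit is the identification of Reid's ``degree'' with $-Z\cdot Z$, which is indeed what makes Laufer's formulas yield the stated table.
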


\begin{proof} (1) is Theorem in \cite[section 4.17]{Reid:ChAlgSurf}. (2.i) is Corollary of [loc. cit., Section 4.25] and (2.ii) is Main Theorem of [loc. cit., Section 4.23]. (3) is Theorem 4-6-18 of \cite{Matsuki:Mori}. (4) is Theorem 4-6-28 of [loc. cit.].
\end{proof}

One of the classical approach to bounding regularity is using generic projection, as showed in Lazarsfeld's work \cite{Larz:RegSurface} for the nonsingular surface case. This method also has been used by Kwak to obtain regularity bounds for higher dimensional nonsingular varieties in \cite{Kwak:Reg34} and \cite{Kwak:GenEquReg}. We refer to [loc. cit.] for the details about the construction of generic projection.

Here let us focus on the surface case. Let $X$ be a nondegenerate projective surface in $\nP^n$ ($n\geq 4$). Take a linear space $\Lambda$ in $\nP^n$ of codimension $4$ disjoint with $X$. By blowing up $\nP^n$ along the center $\Lambda$ and then projecting to $\nP^3$,
we obtain the following diagram
$$
\begin{CD}
 \Bl_{\Lambda}\nP^n @>q>> \nP^3\\
 @VpVV  \\
 \nP^n.
\end{CD}
$$
Denote by $f:X\longrightarrow \nP^3$ the corresponding linear projection of $X$ to $\nP^3$ determined by the center $\Lambda$. Consider the morphism
$q_*(p^*\sO_{\nP^n}(2))\longrightarrow q_*(p^*\sO_X(2))$ induced by the restriction morphism $\sO_{\nP^n}(2)\longrightarrow \sO_X(2)$. Since $q_*(p^*\sO_X(2))=f_*\sO_X(2)$, we get a morphism
$$w_2:q_*(p^*\sO_{\nP^n}(2))\longrightarrow f_*\sO_X(2).$$
We choose the coordinates of $\nP^n$ as $T_0,\cdots,T_n$ such that  $\Lambda$ is defined by the linear forms $T_0=T_1=T_2=T_3=0$. Denote by
$V=<T_4,\cdots,T_n>$ the subspace of the vector space $H^0(\nP^n,\sO_{\nP^n}(1))$. Then we can identify
$$q_*(p^*\sO_{\nP^n}(2))=S^2V\otimes\sO_{\nP^3}\oplus V\otimes\sO_{\nP^3}(1)\oplus\sO_{\nP^3}(2),$$
where $S^2V$ is the second symmetric power of $V$.

When $X$ has isolated singularities we can further choose the center $\Lambda$ to be general so that each singular point of $X$ is the only point in a fiber of the projection $f$. We state this fact in the following lemma.

\begin{lemma}\label{prop04} Suppose that $X$ is a surface with only isolated singularities at points $\{P_1,\cdots,P_r\}$. Then for a general center $\Lambda$, the projection $f:X\longrightarrow\nP^3$ satisfies the condition that  $$\Supp f^{-1}(f(P_i))=\{P_i\},\quad\mbox{for }i=1,\cdots, r.$$
\end{lemma}
\begin{proof} Let $P$ be a singular point of $X$. We define $\Sigma_P$ to be the algebraic set swaped by the line connecting $P$ and any other point $Q$ of $X$. It is clear that the dimension of $\Sigma_P$ is at most $3$. Thus the general $\Lambda$ would not touch $\Sigma_P$ for any singular point $P$ since  $\Lambda$ has codimension $4$. Then the projection $f$ determined by $\Lambda$ will have the desired property.
\end{proof}

In the sequel, we always assume that the center $\Lambda$ is general so that the projection $f$ has the property in Lemma \ref{prop04}. The key point to obtain a regularity bound for $X$ is to prove that the morphism $w_2$ is surjective. If $X$ is nonsingular then the classical result on generic projection says that each fiber of $f$ has length no more than three and then by base change the morphism $w_2$ is surjective. This is how generic projection was used in the work \cite{Larz:RegSurface}. However, if $X$ has singular points, then the fiber of $f$ would become complicated and its length could be very large. Indeed, the main difficulty only appears, after applying Lemma \ref{prop04}, in the fiber which contains the singular points of $X$. For a fiber supported in the smooth locus of $X$, a modification of the classical result of smooth case will still give us the fact that the length of this fiber is no more than three. We give a proof for this fact by the dimension counting techniques, following the argument in \cite{GH:PrinAlgGeom} or \cite{Moishenzon:ComplexSurfaces}.

\begin{lemma}\label{p:100} Suppose $X$ is a surface with only isolated singularities at points $\{P_1,\cdots,P_r\}$. Let $X'$ be the image of a generic projection $f:X \longrightarrow \nP^3$. Write $U'=X'-\{f(P_1),\cdots, f(P_r)\}$. Then $U'$ has only ordinary singularities.
\end{lemma}
\begin{proof} For the definition of ordinary singularities, we refer to \cite[p.616]{GH:PrinAlgGeom}. The proof is almost the same as \cite[p.611]{GH:PrinAlgGeom} and \cite{Moishenzon:ComplexSurfaces}, so we only write the crucial steps here.

Write $U=X-\{P_1,\cdots,P_r\}$ the smooth locus of $X$. Since the secant variety of $X$ has dimension no more than $5$, we can certainly use a generic projection to project $X$ into $\nP^5$ such that the image of $X$ has only isolated singularities and $U$ is projected isomorphically into $\nP^5$. Hence we can assume that $X\subset \nP^5$.

Following the notation in \cite[p.611]{GH:PrinAlgGeom}, let $G(2,6)$ be the Grassmannian of lines in $\nP^5$. In the variety $X\times X\times G(2,6)$, consider the incident correspondence $I$ defined as
$$I=\{(p,q,\Lambda)\ |\ \dim \overline{pq\Lambda}\leq 2\}.$$
Then $\dim I=9$ and a fiber of $I\rightarrow G$ over a general point $\Lambda$ determines a double curve $C_{\Lambda}$ on $X$ which contains all multi-points fibers of the projection $f$ from the center $\Lambda$. The key is that we can choose $\Lambda$ general such that the corresponding double curve $C_{\Lambda}$ avoids all singular points of $X$. This is because if we fix a singular point $P$, then for any other point $Q\in X$  the lines touching $\overline{PQ}$ are parameterized by a Schubert cycle $\sigma_3$ of dimension $5$ in $G(2,6)$. Thus the subset $I_P=\{(P,Q,\Lambda)\ |\ \dim \overline{PQ\Lambda}\leq 2\}$ of $I$ has dimension $7$, which is smaller than $\dim G(2,6)$.

Once a double curve $C_\Lambda$ for a general $\Lambda$ avoids the singular points of $X$, then the rest argument is the same as \cite{GH:PrinAlgGeom}. Alternatively, we can use the technique in \cite{Moishenzon:ComplexSurfaces}. Let $T(X)$ be the closure of tangent planes at nonsingular points of $X$. Then $\dim T(X)\leq4$. The general $\Lambda\in G(2,6)$ will cut $T(X)$ at finitely many points, say $\{Q_1,\cdots, Q_s\}$. Each $Q_i$ determines some points $Q_{ij}\in X$ such that $Q_i$ is in the tangent space of $X$ at $Q_{ij}$. Since $\Lambda$ is general, we can assume all $Q_{ij}$ are in the smooth locus $U$ of $X$, (as long as $\Lambda$ does not touch the edge of the $T(X)$ as a closure of an open set). This fact plus the choice of double curve $C_{\Lambda}$ guarantees that we can take a small open neighborhood $U_i$ for each singular point $P_i$ such that under a general projection $f$ from $\Lambda$, the restriction $f|_{U_i}$ is injective and $f|_{U_i-\{P_i\}}$ is isomorphic. Now those $U_i$ will replace $U_i$ in (1) of \cite[Theorem 3, p.60]{Moishenzon:ComplexSurfaces} and the rest of the argument is the same as (2) of [loc. cit.].
\end{proof}

Lemma \ref{prop04} and Lemma \ref{p:100} show that in order to make $w_2$ surjective, we need to carefully analyze the singularities of the fibers supported at the singular points of $X$. Thus we impose the following reasonable condition on singular points.

\begin{condition}\label{condition} Let $X$ be a surface with isolated singularities in $\nP^n$. For each singular point $P$ of $X$, the Loewy length $\mu$ of the local ring $\sO_{X,P}/(l_1,l_2,l_3)$ is no more than $2$, where $l_1,l_2,l_3$ are three general linear forms of $\nP^n$ passing through the point $P$.
\end{condition}

\begin{proposition} Let $X$ be a surface with isolated singularities satisfying Condition \ref{condition}. Then for a general center $\Lambda$ the morphism $w_2$ is surjective.
\end{proposition}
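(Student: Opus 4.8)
The plan is to test surjectivity of $w_2$ one fibre at a time over $\nP^3$ and, at each closed point $y\in\nP^3$, to reduce the question to the vanishing of an $H^1$ on the fibre of the projection $f$; that vanishing then comes from the classical generic projection estimate away from $f(\Sing X)$ and from Corollary \ref{s1} over $f(\Sing X)$. First I would record that $f\colon X\to\nP^3$ is finite: if $\Pi_y\cong\nP^{n-3}$ denotes the $(n-3)$-plane through $\Lambda$ contracted by $q$ to $y$, then $f^{-1}(y)=X\cap\Pi_y$, which is zero-dimensional because $X\cap\Lambda=\varnothing$. Since a morphism of coherent $\sO_{\nP^3}$-modules is surjective precisely when it is so after $\otimes\,k(y)$ for every closed point $y$ (Nakayama), it is enough to see that each $w_2\otimes k(y)$ is surjective. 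Here $q_*(p^*\sO_{\nP^n}(2))$ is the locally free sheaf displayed above, with fibre $H^0(\Pi_y,\sO_{\Pi_y}(2))$ at $y$, while base change for the finite morphism $f$ gives $f_*\sO_X(2)\otimes k(y)=H^0(Z_y,\sO_{Z_y}(2))$, where $Z_y:=X\cap\Pi_y$ and $\sO_{Z_y}(2):=\sO_{\nP^n}(2)|_{Z_y}$. Under these identifications $w_2\otimes k(y)$ is the restriction map $H^0(\Pi_y,\sO_{\Pi_y}(2))\to H^0(Z_y,\sO_{Z_y}(2))$, whose cokernel is $H^1(\Pi_y,\sI_{Z_y/\Pi_y}(2))$ (as $H^1(\Pi_y,\sO_{\Pi_y}(2))=0$). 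So the whole problem becomes
$$H^1(\Pi_y,\sI_{Z_y/\Pi_y}(2))=0\qquad\text{for every }y\in\nP^3,$$
equivalently, the zero-dimensional scheme $Z_y\subset\Pi_y\cong\nP^{n-3}$ is $3$-regular (for a finite scheme this may be checked in $\Pi_y$ or in $\nP^n$ indifferently).

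Next I would split according to whether $y$ lies over a singular point. If $y\notin f(\Sing X)$, then $\Supp Z_y$ is disjoint from $\Sing X$, so $Z_y$ is a finite subscheme of the smooth surface $X\setminus\Sing X$; for a general centre $\Lambda$ the projection of $X\setminus\Sing X$ to $\nP^3$ is a classical generic projection of a smooth surface, so every such fibre has length $\le 3$ (see \cite{Larz:RegSurface}) and hence is $3$-regular. If instead $y=f(P)$ with $P\in\Sing X$, then Lemma \ref{prop04} (valid for general $\Lambda$) gives $\Supp Z_y=\{P\}$, so $\sO_{Z_y}=\sO_{X,P}/(l_1,l_2,l_3)$, where $l_1,l_2,l_3$ are the three linear forms cutting $\Pi_y$ out of $\nP^n$; they vanish at $P$ since $P\in\Pi_y$. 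Moreover $\Pi_y=\Span(\Lambda\cup\{P\})$, and as $\Lambda$ ranges over general codimension-$4$ subspaces disjoint from $X$ this span ranges over a dense family of $(n-3)$-planes through $P$ — a given such plane $\Pi\ni P$ is recovered by taking $\Lambda$ to be a general hyperplane of $\Pi$ avoiding $P$ and the finite set $X\cap\Pi$ — so the triple $(l_1,l_2,l_3)$ is general among linear forms through $P$. Condition \ref{condition} then tells us that the Loewy length $\mu$ of $\sO_{X,P}/(l_1,l_2,l_3)$ is at most $2$, so by Corollary \ref{s1} the one-point scheme $Z_y$ is $(\mu+1)$-regular, hence $3$-regular. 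As $\Sing X$ is finite, a single general $\Lambda$ meets all of these (finitely many, generic) requirements at once, and $w_2$ is surjective.

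The step I expect to be the main obstacle is coordinating the two genericity inputs on $\Lambda$: one needs $\Lambda$ general enough that the projection of $X\setminus\Sing X$ is a genuine generic projection — so that every fibre over $\nP^3\setminus f(\Sing X)$ really has length $\le 3$ — and at the same time general enough that, for each of the finitely many singular points $P$, the triple $(l_1,l_2,l_3)$ carving out $\Pi_{f(P)}$ is general through $P$ in the precise sense of Condition \ref{condition}. Verifying that ``the Loewy length of $\sO_{X,P}/(l_1,l_2,l_3)$ is $\le 2$'' is a dense (indeed open) condition on $\Lambda$, and that it is preserved under the passage $\Lambda\mapsto\Span(\Lambda\cup\{P\})\mapsto(l_1,l_2,l_3)$, is the delicate bookkeeping. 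The base-change identification of $w_2\otimes k(y)$ with the restriction map — in particular trivializing the twist by $\sO_{\nP^n}(2)$ over the finite scheme $Z_y$ and checking functoriality — is routine but also wants care.
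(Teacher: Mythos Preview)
Your proposal is correct and follows essentially the same route as the paper: reduce to fibrewise surjectivity of $w_2$ by base change, identify $w_2\otimes k(y)$ with the restriction $H^0(\Pi_y,\sO_{\Pi_y}(2))\to H^0(Z_y,\sO_{Z_y}(2))$, and then verify $3$-regularity of $Z_y$ by the classical length-$\le 3$ bound over the smooth locus and by Corollary~\ref{s1} combined with Condition~\ref{condition} at the finitely many singular fibres. Your write-up is in fact more scrupulous than the paper's on the two points you flagged---finiteness of $f$ and the passage from ``general $\Lambda$'' to ``general $(l_1,l_2,l_3)$ through $P$''---which the paper handles in a single sentence.
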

\begin{proof} Let $y\in \nP^3$ and let $L_y=q^{-1}(y)$ be the fiber of $q$ over $y$, which is a linear space of $\nP^n$ of codimension $3$. Suppose that the point $y$ is cut out by linear forms $l_1,l_2,l_3$ in $\nP^3$, then the linear space $L_y$ is cut out by the forms $l_1,l_2,l_3$ in $\nP^n$. It is clear that the fiber $X_y=f^{-1}(y)$ is the scheme-theoretical intersection $X\cap L_y$. In order to show $w_2$ is surjective, by the base change, it is enough to show the surjectivity of the morphism $$w_{2,y}:H^0(L_y,\sO_{L_y}(2))\longrightarrow H^0(L_y,\sO_{X_y}(2)).$$

Since $\Lambda$ is general so the projection $f$ satisfies the result of Lemma \ref{prop04} and Lemma \ref{p:100}. We also assume that $y=f(P)$ for some point $P\in X$.
If $P$ is a nonsingular point then by the choice of $\Lambda$, $L_y$ will cut $X$ only in its nonsingular locus. We see that the fiber $X_y$ has the length at most $3$ and thus the morphism $w_{2,y}$ is surjective at $y$.

On the other hand, if $P$ is a singular point of $X$, then by the choice of $\Lambda$ we have $\supp X_y=\{P\}$. Denote by $A=\sO_{X,x}$ the local ring of $P$ on $X$. Locally at the point $P$, $l_1,l_2,l_3$ are three general elements in the maximal ideal $\mf{m}$ of $A$. It is clear that the local ring $\sO_{X_y,P}=A/(l_1,l_2,l_3)$. Since we assume that the local ring $A$ satisfies Condition \ref{condition} we see that the Lowey length of the local ring $\sO_{X_y,P}$ is no more than $2$. By Corollary \ref{s1}, $X_y$ is $3$-regular in the space $L_y$ and then the morphism $w_{2,y}$ is surjective at $y$, which finishes the proof.
\end{proof}

\begin{lemma}\label{prop06} Suppose that $P\in X$ is a normal singular point such that $\mult_PX\leq \embdim_PX$, then the local ring $\sO_{X,P}$ satisfies Condition \ref{condition}
\end{lemma}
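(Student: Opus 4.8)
The plan is to reduce the statement to an elementary computation with lengths of Artinian quotients of $\sO_{X,P}$. Write $A=\sO_{X,P}$ with maximal ideal $\mf{m}$, and set $v:=\embdim_P X=\dim_k\mf{m}/\mf{m}^2$ and $e:=\mult_P X$. Since $X$ is a normal surface, $A$ is a two-dimensional Cohen--Macaulay local ring: normality forces the condition $S_2$, and a two-dimensional $S_2$ local ring has depth $2$, hence is Cohen--Macaulay. The ground field $k=\nC$ is infinite and the restriction map from the space of linear forms of $\nP^n$ vanishing at $P$ onto $\mf{m}/\mf{m}^2$ is surjective, so a general choice of the three linear forms $l_1,l_2,l_3$ through $P$ makes their images in $\mf{m}$ as generic as needed; in particular I may assume that $l_1,l_2$ is a system of parameters which is a minimal reduction of $\mf{m}$, and that cutting successively by $l_1$ and by $l_2$ drops the embedding dimension by one each time.

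The heart of the argument is then to examine the Artinian local ring $B:=A/(l_1,l_2)$ with maximal ideal $\mf{n}=\mf{m}B$. Since $A$ is Cohen--Macaulay and $(l_1,l_2)$ is a parameter ideal that is a minimal reduction of $\mf{m}$, the length $\length_k B$ equals the Hilbert--Samuel multiplicity $\mult_P X=e$; and by the genericity of $l_1,l_2$ one has $\dim_k\mf{n}/\mf{n}^2=v-2$. Expanding the length of $B$ along its $\mf{n}$-adic filtration,
$$\sum_{i\ge 2}\dim_k\mf{n}^i/\mf{n}^{i+1}=\length_k B-1-\dim_k\mf{n}/\mf{n}^2=e-v+1.$$
This quantity is $\ge 0$ automatically, and $\le 1$ by the hypothesis $\mult_P X\le\embdim_P X$; hence at most one of the modules $\mf{n}^i/\mf{n}^{i+1}$ with $i\ge 2$ is nonzero. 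A one-line Nakayama argument now forces $\mf{n}^3=0$: if $\mf{n}^2=\mf{n}^3$ then $\mf{n}^2=0$, while if $\mf{n}^2/\mf{n}^3\ne 0$ then $\mf{n}^i/\mf{n}^{i+1}=0$ for all $i\ge 3$, so $\mf{n}^3=\mf{n}^4$ and again $\mf{n}^3=0$. Thus the Loewy length of $B$ is at most $2$.

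To finish, note that $\sO_{X,P}/(l_1,l_2,l_3)$ is the further quotient $B/l_3B$, and the Loewy length cannot increase under a surjection (the image of $\mf{n}^3$ is $0$), so $\sO_{X,P}/(l_1,l_2,l_3)$ has Loewy length at most $2$; this is exactly Condition \ref{condition}. In fact the argument proves the slightly stronger statement that $\sO_{X,P}/(l_1,l_2)$ already has Loewy length $\le 2$.

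The step I expect to need the most care is the genericity input of the first paragraph: that general linear forms through $P$ restrict to a system of parameters generating a minimal reduction of $\mf{m}$, and that in the Cohen--Macaulay ring $A$ the colength of such a minimal reduction equals $\mult_P X$. These are standard facts from the theory of reductions over an infinite field, but they must be invoked correctly; everything after that is the displayed count. A slightly more self-contained route to the equality $\length_k B=\mult_P X$ is to build $B$ from two successive general hyperplane sections and argue with superficial elements, which makes the multiplicity bookkeeping transparent.
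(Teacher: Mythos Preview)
Your proof is correct and follows essentially the same route as the paper: both pass to the Artinian quotient $B=A/(l_1,l_2)$ by two general linear forms, use Cohen--Macaulayness of the normal surface ring to identify $\length(B)=\mult_P X$ and $\embdim B=\embdim_P X-2$, and then deduce $\mu_B\le 2$ from $\length(B)\le\embdim B+2$ before passing to the further quotient by $l_3$. You are more explicit than the paper in spelling out the filtration count and the Nakayama step that pins down $\mf n^3=0$, and in flagging the genericity inputs (minimal reduction, drop in embedding dimension), which the paper leaves implicit.
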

\begin{proof} Denote by $A=\sO_{X,x}$ the local ring of $P$ on $X$. Let $l_1,l_2,l_3$ be three general elements in the maximal ideal $\mf{m}$ of $A$.  Then $(l_1,l_2)$ is a regular sequence of $A$ and is in the cotangent space. Thus if write $B=A/(l_1,l_2)$ and write $l(B)$ to be the length of $B$, we have $\mult_PX=\l(B)$ and  $\embdim(B)=\embdim_PX-2$. By assumption that $\mult_PX\leq \embdim_PX$, we see that $l(B)\leq\embdim B+2$, which implies that the Loewy length of the Artinian ring $B$ satisfies the inequality $\mu_B\leq 2$. Then the Loewy length of the local ring $A/(l_1,l_2,l_3)$ is also  $\leq 2$ since $\sO_{X_y,P}=B/(l_3)$.
\end{proof}

The following Kodaira type vanishing theorem for a projective normal surface is know to experts but we include its proof here since it is very short.

\begin{lemma} Let $X$ be a projective normal surface and $L$ be a ample line bundle on $X$. Then $H^1(X,\omega_X\otimes L)=H^2(X,\omega_X\otimes L)=0$, where $\omega_X$ is a dualizing sheaf of $X$.
\end{lemma}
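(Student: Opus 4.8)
The plan is to pass to a resolution of singularities and combine Grauert--Riemenschneider vanishing downstairs with Kawamata--Viehweg vanishing upstairs. Recall first that a normal surface is Cohen--Macaulay, so $\omega_X$ is its honest dualizing sheaf, a rank-one reflexive sheaf, and Serre duality holds on $X$. Fix a resolution of singularities $\pi\colon \widetilde{X}\longrightarrow X$ which is an isomorphism over the smooth locus $U:=X\setminus \Sing X$; note $\Sing X$ is a finite set of points and $U$ has complement of codimension $2$ in $X$. The sheaf $\pi_*\omega_{\widetilde{X}}$ is torsion-free of rank one, and the natural (trace) map $\pi_*\omega_{\widetilde{X}}\longrightarrow \omega_X$ is an isomorphism over $U$; being generically an isomorphism between torsion-free sheaves it is injective, and its cokernel $\mathcal{T}$ is supported on $\Sing X$. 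Hence we have a short exact sequence
$$0\longrightarrow \pi_*\omega_{\widetilde{X}}\longrightarrow \omega_X\longrightarrow \mathcal{T}\longrightarrow 0,\qquad \dim\Supp\mathcal{T}=0.$$

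Next I would compute $H^i\big(X,(\pi_*\omega_{\widetilde{X}})\otimes L\big)$ for $i>0$. Since $L$ is ample, $\pi^*L$ is nef and big on $\widetilde{X}$, so Kawamata--Viehweg vanishing gives $H^i(\widetilde{X},\omega_{\widetilde{X}}\otimes\pi^*L)=0$ for $i>0$. By Grauert--Riemenschneider vanishing, $R^j\pi_*\omega_{\widetilde{X}}=0$ for $j>0$, so the projection formula yields $R^j\pi_*(\omega_{\widetilde{X}}\otimes\pi^*L)=(R^j\pi_*\omega_{\widetilde{X}})\otimes L$, which is $(\pi_*\omega_{\widetilde{X}})\otimes L$ for $j=0$ and $0$ for $j>0$. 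The Leray spectral sequence therefore degenerates and gives
$$H^i\big(X,(\pi_*\omega_{\widetilde{X}})\otimes L\big)\;\cong\; H^i(\widetilde{X},\omega_{\widetilde{X}}\otimes\pi^*L)\;=\;0\qquad\text{for all }i>0.$$

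Finally, twist the short exact sequence above by $L$ and take cohomology. Because $\mathcal{T}\otimes L$ has zero-dimensional support, $H^i(X,\mathcal{T}\otimes L)=0$ for $i>0$, so the long exact sequence
$$H^i\big(X,(\pi_*\omega_{\widetilde{X}})\otimes L\big)\longrightarrow H^i(X,\omega_X\otimes L)\longrightarrow H^i(X,\mathcal{T}\otimes L)$$
forces $H^1(X,\omega_X\otimes L)=H^2(X,\omega_X\otimes L)=0$, as desired. The only delicate point is the identification of $\pi_*\omega_{\widetilde{X}}$ as a subsheaf of $\omega_X$ with zero-dimensional cokernel; for the $H^2$-vanishing one may instead argue directly by Serre duality on the Cohen--Macaulay surface $X$, using $\sHom(\omega_X,\omega_X)=\sO_X$ (valid since $\omega_X$ is rank-one reflexive on a normal variety) to reduce to $H^0(X,L^{-1})=0$, which holds because $X$ is integral and $L$ is ample.
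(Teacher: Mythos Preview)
Your proof is correct and follows essentially the same approach as the paper: take a resolution $\pi\colon\widetilde{X}\to X$, use the short exact sequence $0\to\pi_*\omega_{\widetilde{X}}\to\omega_X\to\mathcal{T}\to 0$ with $\mathcal{T}$ of zero-dimensional support, and combine Grauert--Riemenschneider vanishing with Kawamata--Viehweg vanishing on $\widetilde{X}$. You have simply spelled out in detail (Leray spectral sequence, projection formula, long exact sequence) what the paper leaves as ``the result then follows.''
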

\begin{proof} Let $f:X'\longrightarrow X$ be a resolution of singularities. Then we have a short exact sequence
$$0\longrightarrow f_*\omega_{X'}\longrightarrow \omega_X\longrightarrow \sQ\longrightarrow 0,$$
where $\sQ$ has the support of dimension zero since $X$ is normal. Notice that $R^if_*\omega_{X'}=0$ for $i>0$. Then by applying Kawamata-Viehweg vanishing theorem on $X'$ the result follows.
\end{proof}

Once the morphism $w_2$ is surjective, we can use Grothendieck duality to obtain a regularity bound for $X$. This part of argument is done in \cite[Section 2]{Larz:RegSurface} by cohomological computation for nonsingular case. It seems to us that Lazarsfeld's argument can also be applied here without much change. But using Duality trick provides an alternative approach which could shed new light on this type of problem.

\begin{proposition}\label{prop07} Let $X\subset\nP^n$ be a nondegenerate normal surface and suppose that $w_2$ is surjective for a general center $\Lambda$, then
$$\reg X\leq \deg X-\codim X+1.$$
\end{proposition}
\begin{proof} Recall that we choose coordinates of $\nP^n$
such that $\Lambda$ is defined by $T_0=T_1=T_2=T_3=0$ and denote
by $V=<T_4,...,T_n>$ the vector subspace of $H^0(\nP^n,\sO_{\nP^n}(1))$, then
$$q_*(p^*\sO_{\nP^n}(2))=S^2V\otimes\sO_{\nP^3}\oplus V\otimes\sO_{\nP^3}(1)\oplus\sO_{\nP^3}(2),$$
where $S^2V$ is the second symmetric power of $V$. Twisting $w_2$ by $\sO_{\nP^3}(-2)$ and writing $E$ to be the kernel, then we have an exact sequence
\begin{equation}\label{3}
0\longrightarrow E\longrightarrow S^2V\otimes\sO_{\nP^3}(-2)\oplus
V\otimes\sO_{\nP^3}(-1)\oplus\sO_{\nP^3}\longrightarrow f_*\sO_X\longrightarrow 0.
\end{equation}
Since $X$ is Cohen-Macaulay and $f$ is finite, $f_*\sO_X$ is a sheaf
of codimension one Cohen-Macaulay $\sO_{\nP^3}$-module and therefore $E$ is a locally free sheaf of rank
$$r=\rank S^2V\otimes\sO_{\nP^3}(-2)\oplus
V\otimes\sO_{\nP^3}(-1)\oplus\sO_{\nP^3}=\frac{(n-2)(n-1)}{2}.$$

We claim that the dual $E^\vee$ is $(-2)$-regular. To see this, let
$\omega_X$ be a dualizing sheaf of $X$. Applying $\sHom(\ \_\
,\omega_{\nP^3})$ to the exact sequence (\ref{3}), we get an exact sequence
\begin{equation}\label{2}
0\longrightarrow S^2V\otimes\omega_{\nP^3}(2)\oplus
V\otimes\omega_{\nP^3}(1)\oplus\omega_{\nP^3}\longrightarrow E^{\vee}(-4)\longrightarrow f_*\omega_X\longrightarrow 0.
\end{equation}
Twisting it by $\sO_{\nP^3}(1)$ and taking $H^1$ cohomology, we see that $H^1(\nP^3, E^{\vee}(-3))=0$. Then taking $H^2$ cohomology of the exact sequence (\ref{2}), we have
$$0\longrightarrow H^2(\nP^3,E^{\vee}(-4))\longrightarrow H^2(\nP^3,f_*\omega_X)\longrightarrow H^3(\nP^3,\omega_{\nP^3})\longrightarrow \cdots.$$
Since
$H^2(\nP^3,f_*\omega_X)=H^2(X,\omega_X)=H^3(\nP^3,\omega_{\nP^3})=k$,
we obtain that $H^2(\nP^3,E^{\vee}(-4))=0$. For cohomology $H^3$ of
$E^{\vee}$, twist the exact sequence (\ref{2}) by $\sO_{\nP^3}(-1)$
and then take $H^3$ cohomology to get an exact sequence
$$H^2(f_*\omega_X(-1))\stackrel{\theta}{\longrightarrow} H^3(\nP^3,V\otimes\omega_{\nP^3}\oplus\omega_{\nP^3}(-1))\longrightarrow H^3(\nP^3,E^{\vee}(-5))\longrightarrow 0.$$
We shall show that the morphism $\theta$ is surjective. By duality,
it is the same as
$$H^0(X,\sO_X(1))^{\vee}\longrightarrow H^0(\nP^3,V\otimes\sO_{\nP^3}\oplus\sO_{\nP^3}(1))^{\vee}$$
which is the dual of the morphism
$$H^0(\nP^3,V\otimes\sO_{\nP^3}\oplus\sO_{\nP^3}(1))\longrightarrow H^0(X,\sO_X(1)).$$
Note that
$H^0(\nP^3,V\otimes\sO_{\nP^3}\oplus\sO_{\nP^3}(1))=H^0(\nP^n,\sO_{\nP^n}(1))$.
Since $X$ is nondegenerate in $\nP^n$ the
morphism
$$H^0(\nP^n,\sO_{\nP^n}(1))\longrightarrow H^0(X,\sO_X(1))$$
is injective and therefore $\theta$ is surjective. Thus we obtain
$H^3(\nP^3,E^{\vee}(-5))=0$ and conclude that $E^{\vee}$ is
$(-2)$-regular.

Back to the exact sequence (\ref{3}) and let $d:=\deg X$. Since $\Supp
f_*\sO_X$ is a degree $d$ hypersurface of $\nP^3$ we obtain
\begin{eqnarray*}
c_1(E) & = & -d+c_1(S^2V\otimes\sO_{\nP^3}(-2)\oplus
V\otimes\sO_{\nP^3}(-1)\oplus\sO_{\nP^3})\nonumber\\
&=& -d-(n-1)(n-3), \nonumber
\end{eqnarray*}
and therefore $\det E=\sO_{\nP^3}(-d-(n-1)(n-3))$. Now from the
canonical identity $$E=(\wedge^{r-1}E)^{\vee}\otimes\det E,$$ we
have that $E$ is $(-2)(r-1)+d+(n-1)(n-3)$-regular, i.e.
$(d-n+3)$-regular. From the exact sequence (\ref{3}), we get that
$f_*\sO_X$ and hence $\sO_X$ is $(d-n+3)$-regular. Finally, by using
\cite[Lemma 1.5]{Larz:RegSurface}, we conclude that $\reg X=(d-n+3)$.
\end{proof}

Now combining all the things we have done, we can write down the proof of Theorem \ref{prop08}.
\begin{proof}[Proof of Theorem \ref{prop08}]This is from Theorem \ref{prop05}, Lemma \ref{prop06} and Proposition \ref{prop07}.
\end{proof}

\begin{remark} The Duality trick in the proof of Proposition \ref{prop06} can be used directly to prove a regularity bound for a nonsingular curve, which was obtained in \cite{GLP}.
\end{remark}

\bibliographystyle{alpha}

\end{document}